\newtheorem{theorem}{Theorem}
\newtheorem{lemma}{Lemma}
\newtheorem{corollary}{Corollary}
\newtheorem{proposition}{Proposition}
\begin{document}

\title{Optimal Convergence Rates for the Orthogonal Greedy Algorithm
}

\author{Jonathan W. Siegel \\
  Department of Mathematics\\
  Pennsylvania State University\\
  University Park, PA 16802 \\
  \texttt{jus1949@psu.edu} \\
  \And Jinchao Xu \\
  Department of Mathematics\\
  Pennsylvania State University\\
  University Park, PA 16802 \\
  \texttt{jxx1@psu.edu} \\
}

\maketitle

\begin{abstract}
We analyze the orthogonal greedy algorithm when applied to dictionaries $\mathbb{D}$ whose convex hull has small entropy. We show that if the metric entropy of the convex hull of $\mathbb{D}$ decays at a rate of $O(n^{-\frac{1}{2}-\alpha})$ for $\alpha > 0$, then the orthogonal greedy algorithm converges at the same rate on the variation space of $\mathbb{D}$.
This improves upon the well-known $O(n^{-\frac{1}{2}})$ convergence rate of the orthogonal greedy algorithm in many cases, most notably for dictionaries corresponding to shallow neural networks. These results hold under no additional assumptions on the dictionary beyond the decay rate of the entropy of its convex hull. In addition, they are robust to noise in the target function and can be extended to convergence rates on the interpolation spaces of the variation norm. We  show empirically that the predicted rates are obtained for the dictionary corresponding to shallow neural networks with Heaviside activation function in two dimensions. Finally, we show that these improved rates are sharp  and prove a negative result showing that the iterates generated by the orthogonal greedy algorithm cannot in general be bounded in the variation norm of $\mathbb{D}$.
\end{abstract}

\section{Introduction}
Let $H$ be a Hilbert space and $\mathbb{D}\subset H$ a dictionary of basis functions. An important problem in machine learning, statistics, and signal processing is the non-linear approximation of a target function $f$ by a sparse linear combinations of dictionary elements
\begin{equation}\label{sparse-dictionary expansion}
 f_n = \sum_{i=1}^n a_ig_i,
\end{equation}
where $g_i\in \mathbb{D}$ depend upon the function $f$. Typical examples include non-linear approximation by redundant wavelet frames \cite{mallat1993matching}, shallow neural networks, which correspond to non-linear approximation by dictionaries of ridge functions \cite{lee1996efficient}, and gradient boosting \cite{friedman2001greedy}, which corresponds to non-linear approximation by a dictionary of weak learners. Another important example is compressed sensing \cite{donoho2006compressed,candes2006robust,cohen2009compressed}, where a function $f$ which is a sparse linear combination of dictionary elements is recovered from a small number of linear measurements.

In this work, we study the problem of algorithmically calculating an expansion of the form \eqref{sparse-dictionary expansion} to approximate a target function $f$. A common class of algorithms for this purpose are greedy algorithms, specifically the pure greedy algorithm \cite{mallat1993matching},
\begin{equation}
 f_0 = 0,~g_k = \arg\max_{g\in \mathbb{D}}|\langle g,f - f_{k-1}\rangle|,~f_k = f_{k-1} + \langle g_k,f - f_{k-1}\rangle g_k,
\end{equation}
which is also known as matching pursuit, the relaxed greedy algorithm \cite{jones1992simple,barron2008approximation,barron1993universal},
\begin{equation}\label{relaxed-greedy-algorithm}
 f_0 = 0,~(\alpha_k,\beta_k,g_k) = \arg\min_{\alpha_k,\beta_k\in \mathbb{R},g_k\in \mathbb{D}} \|f - \alpha_kf_{k-1} - \beta_kg_k\|_H,~f_k = \alpha_kf_{k-1} + \beta_kg_k,
\end{equation}
and the orthogonal greedy algorithm (also known as orthogonal matching pursuit) \cite{pati1993orthogonal}
\begin{equation}\label{orthogonal-greedy}
 f_0 = 0,~g_k = \arg\max_{g\in \mathbb{D}} |\langle g,r_{k-1}\rangle|,~f_k = P_kf,
\end{equation}
where $r_k = f - f_k$ is the residual and $P_k$ is the orthogonal projection onto the span of $g_1,...,g_k$. 

When applied to general dictionaries $\mathbb{D}\subset H$, greedy algorithms are often analyzed under the assumption that the target function $f$ lies in the convex hull of the dictionary $\mathbb{D}$. Specifically, following \cite{siegel2021optimal,devore1998nonlinear,kurkova2001bounds} we write the closed convex hull of $\mathbb{D}$ as
\begin{equation}\label{unit-ball-definition}
 B_1(\mathbb{D}) = \overline{\left\{\sum_{j=1}^n a_jh_j:~n\in \mathbb{N},~h_j\in \mathbb{D},~\sum_{i=1}^n|a_i|\leq 1\right\}},
\end{equation}
and denote the gauge norm \cite{rockafellar1970convex} of this set, which is often called the variation norm with respect to $\mathbb{D}$, as
\begin{equation}\label{norm-definition}
 \|f\|_{\mathcal{K}_1(\mathbb{D})} = \inf\{c > 0:~f\in cB_1(\mathbb{D})\}.
\end{equation}
A typical assumption then is that the target function $f$ satisfies $\|f\|_{\mathcal{K}_1(\mathbb{D})} < \infty$. One notable exception is compressed sensing, where it is typically assumed that $f$ is a linear combination of a small number of elements $g_i\in \mathbb{D}$. In this case, however, one needs an additional assumption on the dictionary $\mathbb{D}$, usually incoherence \cite{donoho2005stable} or a restricted isometry property (RIP) \cite{candes2005decoding}. The analysis of greedy algorithms under these assumptions is given in \cite{tropp2007signal,needell2009cosamp,needell2009uniform,tropp2004greed} for instance. In this work we are interested in the case of general dictionaries $\mathbb{D}\subset H$ which do not satisfy any incoherence or RIP conditions, however, so we will consider the convex hull condition on $f$ given above.

Given a function $f\in B_1(\mathbb{D})$, a sampling argument due to Maurey \cite{pisier1981remarques} implies that there exists an $n$-term expansion which satisfies
\begin{equation}\label{maurey-bound}
 \inf_{f_n\in \Sigma_n(\mathbb{D})} \|f - f_n\|_H \leq |\mathbb{D}|n^{-\frac{1}{2}}.
\end{equation}
Here $|\mathbb{D}| := \sup_{g\in \mathbb{D}}\|g\|_H$ is the maximum norm of the dictionary and
\begin{equation}
 \Sigma_n(\mathbb{D}) = \left\{\sum_{i=1}^n a_ig_i,~a_i\in \mathbb{R},~g_i\in \mathbb{D}\right\}
\end{equation}
is the set of $n$-term non-linear dictionary expansions. One can even take the expansion $f_n$ in \eqref{maurey-bound} to satisfy $\sum_{i=1}^n |a_i| \leq 1$. It is also known that for general dictionaries $\mathbb{D}$, the approximation rate \eqref{maurey-bound} is the best possible \cite{kurkova2001bounds} up to a constant factor. The key problem addressed by greedy algorithms is whether the rate in \eqref{maurey-bound} can be achieved algorithmically.

For simplicity of notation, we assume in the following that $|\mathbb{D}| \leq 1$, i.e. that $\|g\|_H\leq 1$ for all $g\in \mathbb{D}$, which can always be achieved by scaling the dictionary appropriately. Relaxing this assumption changes the results in a straightforward manner. It has been shown that under these assumptions the orthogonal greedy algorithm \cite{devore1996some} and a suitable version of the relaxed greedy algorithm \cite{jones1992simple,barron2008approximation} satisfy
\begin{equation}\label{greedy-initial-approx}
 \|f_n - f\|_H \leq K\|f\|_{\mathcal{K}_1(\mathbb{D})}n^{-\frac{1}{2}},
\end{equation}
for a suitable constant $K$ (here and in the following $K$, $M$, and $C$ represent unspecified constants). Thus the orthogonal and relaxed greedy algorithms are able to algorithmically attain the approximation rate in \eqref{maurey-bound} up to a constant. The behavior of the pure greedy algorithm is much more subtle. A sequence of improved upper bounds on the convergence rate of the pure greedy algorithm have been obtained in \cite{devore1996some,livshits2004rate,sil2004rate}, which culminate in the bound
\begin{equation}
 \|f_n - f\|_H \leq K\|f\|_{\mathcal{K}_1(\mathbb{D})}n^{-\gamma},
\end{equation}
where $\gamma \approx 0.182$ satisfies a particular non-linear equation. Conversely, in \cite{livshits2009lower} a dictionary is constructed for which the pure greedy algorithm satisfies
\begin{equation}
 \|f_n - f\|_H \geq K\|f\|_{\mathcal{K}_1(\mathbb{D})}n^{-0.1898}.
\end{equation}
Thus the precise convergence rate of the pure greedy algorithm is still open, but it is known that it fails to achieve the rate \eqref{maurey-bound} by a significant margin. A further interesting open problem concerns the pure greedy algorithm with shrinkage $s\in (0,1]$, which is given by
\begin{equation}
 f_0 = 0,~g_k = \arg\max_{g\in \mathbb{D}}|\langle g,r_{k-1}\rangle|,~f_k = f_{k-1} + s\langle g_k,r_{k-1}\rangle g_k,
\end{equation}
where again $r_k = f - f_k$ is the residual.
This algorithm is important for understanding gradient boosting \cite{friedman2001greedy}. It is shown in \cite{sil2004rate} that the convergence order improves as $s$ decreases to a maximum of about $0.305$ as $s\rightarrow 0$, but it is an open problem whether the optimal rate \eqref{maurey-bound} can be achieved in the limit as $s\rightarrow 0$.

The preceding results hold under the assumption that $\|f\|_{\mathcal{K}_1(\mathbb{D})} < \infty$, i.e. that the target function is in the scaled convex hull of the dictionary. An important question is how robust the greedy algorithms are to noise. This is captured by the more general assumption that $f$ is not itself in the convex hull, but rather that $f$ is close to an element in the convex hull. Specifically, we assume that $h\in \mathcal{K}_1(\mathbb{D})$ and that $\|f - h\|_H$, which can be thought of as noise, is small. The convergence rate of the orthogonal and relaxed greedy algorithms are analyzed under these assumptions in \cite{barron2008approximation}. Specifically, it is shown that for any $f\in H$ and any $h\in \mathcal{K}_1(\mathbb{D})$, the iterates of the orthogonal greedy algorithm satisfy
\begin{equation}
 \|f_n - f\|^2_H \leq \|f - h\|_H^2 + K^2\|h\|^2_{\mathcal{K}_1(\mathbb{D})}n^{-1}.
\end{equation}
This result is important for the statistical analysis of the orthogonal greedy algorithm, for instance in showing the universal consistency of the estimator obtained by applying this algorithm to fit a function $f$ based on samples $f(x_1),...,f(x_n)$ \cite{barron2008approximation}. It also gives a convergence rate for the orthogonal greedy algorithm on the interpolation spaces between $\mathcal{K}_1(\mathbb{D})$ and $H$. The $K$-functional of $H$ and $\mathcal{K}_1(\mathbb{D})$ is defined by
\begin{equation}
 K(f,t) := K(f,t,H,\mathcal{K}_1(\mathbb{D})) = \inf_{g\in H} \|f - g\|_H + t\|g\|_{\mathcal{K}_1(\mathbb{D})}.
\end{equation}
The interpolation space $X_\theta := [H,\mathcal{K}_1(\mathbb{D})]_{\theta,\infty}$ is defined by the norm $\|f\|_X = \sup_{t > 0} K(f,t)t^{-\theta}$. Intuitively, the interpolation spaces measure how efficiently a function can be approximated by elements with small $\mathcal{K}_1(\mathbb{D})$-norm.
We refer to \cite{barron2008approximation} and \cite{devore1993constructive}, Chapter 6, for more information on interpolation spaces.

Although the approximation rate \eqref{maurey-bound} is sharp for general dictionaries, for compact dictionaries the rate can be improved using a stratified sampling argument \cite{makovoz1996random,klusowski2018approximation,CiCP-28-1707}. Specifically, the dictionary
\begin{equation}
 \mathbb{P}_0^d = \{\sigma_0(\omega\cdot x + b),~\omega\in \mathbb{R}^d,~b\in \mathbb{R}\}\subset L^2(\Omega),
\end{equation}
where the activation function $\sigma_0$ is the Heaviside function and $\Omega\subset \mathbb{R}^d$ a bounded domain was analyzed in \cite{makovoz1996random}. There it was shown that for $f\in B_1(\mathbb{P}_0^d)$ the approximation rate \eqref{maurey-bound} can be improved to
\begin{equation}
 \inf_{f_n\in \Sigma_n(\mathbb{P}_0^d)} \|f - f_n\|_H \leq Kn^{-\frac{1}{2}-\frac{1}{2d}}.
\end{equation}
This corresponds to an improved approximation rate for shallow neural networks with Heaviside (and also sigmoidal) activation function. For shallow neural networks with ReLU$^k$ activation function, the relevant dictionary \cite{siegel2021sharp} is
\begin{equation}
 \mathbb{P}_k^d = \{\sigma_k(\omega\cdot x + b),~\omega\in S^{d-1},~b\in [-c,c]\}\subset L^2(\Omega),
\end{equation}
where $S^{d-1}$ is the unit sphere is $\mathbb{R}^d$ and $c$ is the diameter of $\Omega$ (assuming without loss of generality that $0\in \Omega$). Using the smoothness of the dictionary $\mathbb{P}_k^d$, it has been shown that for $f\in B_1(\mathbb{P}_k^d)$ an approximation rate of
\begin{equation}\label{eq-189}
 \inf_{f_n\in \Sigma_n(\mathbb{P}_k^d)} \|f - f_n\|_H \leq Kn^{-\frac{1}{2}-\frac{2k+1}{2d}}
\end{equation}
can be achieved, and that this rate is optimal if the coefficients of $f_n$ are bounded \cite{siegel2021sharp}. Given these improved theoretical approximation rates for shallow neural networks, it has been an important open problem whether they can be achieved algorithmically by greedy algorithms.

In this work, we show that the improved approximation rates \eqref{eq-189} can be achieved using the orthogonal greedy algorithm. More generally, we show that the orthogonal greedy algorithm improves upon the rate \eqref{maurey-bound} whenever the convex hull of the dictionary $\mathbb{D}$ has small metric entropy. Specifically, we recall that the (dyadic) metric entropy of a set $A$ in a Banach space $X$ is defined by
\begin{equation}
 \epsilon_n(A)_X = \inf\{\epsilon:~A~\text{can be covered by $2^n$ balls of radius $\epsilon$}\}.
\end{equation}
The metric entropy gives a measure of compactness of the set $A$ and a detailed theory can be found in \cite{lorentz1996constructive}, Chapter 15. We show that if the dictionary $\mathbb{D}$ satisfies
\begin{equation}\label{entropy-condition}
 \epsilon_n(B_1(\mathbb{D})_H \leq Kn^{-\frac{1}{2}-\alpha},
\end{equation}
then the orthogonal greedy algorithm \eqref{orthogonal-greedy} satisfies
\begin{equation}\label{improved-rate}
 \|f_n - f\|_H \leq K'\|f\|_{\mathcal{K}_1(\mathbb{D})}n^{-\frac{1}{2}-\alpha},
\end{equation}
where $K'$ only depends upon $K$ and $\alpha$. More generally, this analysis is also robust to noise in the sense considered in \cite{barron2008approximation}, i.e. for any $f\in H$ and $g\in \mathcal{K}_1(\mathbb{D})$ we have
\begin{equation}
 \|f_n - f\|^2_H \leq \|f - h\|_H^2 + (K')^2\|h\|^2_{\mathcal{K}_1(\mathbb{D})}n^{-1-2\alpha}.
\end{equation}
Utilizing the metric entropy bounds proven in \cite{siegel2021sharp}, this implies that the orthogonal greedy algorithm achieves the rate \eqref{eq-189} for shallow neural networks with ReLU$^k$ activation function. We provide numerical experiments using shallow neural networks with the Heaviside activation function which confirm that these optimal rates are indeed achieved. Additional numerical experiments which confirm the theoretical approximation rates for shallow networks with ReLU$^k$ activation function can be found in \cite{hao2021efficient}, where the orthogonal greedy algorithm is used to solve elliptic PDEs.

We conclude the manuscript with a lower bound and negative result concerning the orthogonal greedy algorithm. Consider approximating $f\in B_1(\mathbb{D})$ by dictionary expansions with $\ell^1$-bounded coefficients, i.e. from the set
\begin{equation}
 \Sigma_{n,M}(\mathbb{D}) = \left\{\sum_{j=1}^n a_jh_j:~h_j\in \mathbb{D},~\sum_{i=1}^n|a_i|\leq M\right\}.
\end{equation}
It is known that the corresponding approximation rates are lower bounded by the metric entropy up to logarithmic factors. In particular, if for some $M > 0$ and any $f\in B_1(\mathbb{D})$, we have
\begin{equation}
 \inf_{f_n \in \Sigma_{n,M}(\mathbb{D})} \|f - f_n\| \leq Kn^{-\frac{1}{2}-\alpha},
\end{equation}
then we must have $\epsilon_{n\log{n}}(B_1(\mathbb{D}))\leq Kn^{-\frac{1}{2}-\alpha}$. The result \eqref{improved-rate} shows that the orthogonal greedy algorithm achieves this rate. 
However, we note that these lower bounds do not a priori apply to the orthogonal greedy algorithm since the expansions generated will in general not have coefficients uniformly bounded in $\ell^1$. In fact, we give an example demonstrating that the iterates generated by the orthogonal greedy algorithm may have arbitrarily large $\mathcal{K}_1(\mathbb{D})$-norm. Nonetheless, we will show that the rate \eqref{improved-rate} is sharp and cannot be further improved for general dictionaries satisfying \ref{entropy-condition}. We do not know whether there exists a greedy algorithm which can attain the improved approximation rate \eqref{improved-rate} while also guaranteeing a uniform $\ell^1$-bound on the coefficients.

The paper is organized as follows. In the Section \ref{orthogonal-greedy-section}, we derive the improved convergence rate \eqref{improved-rate}. Next, in Section \ref{numerical-experiments-section} we provide numerical experiments which demonstrate the improved rates. Then, in Section \ref{lower-bounds-section}, we give an example showing that the iterates generated by the orthogonal greedy algorithm cannot be bounded in $\mathcal{K}_1(\mathbb{D})$ and also show that the improved rates are tight. Finally, we give concluding remarks and further research directions.

\section{Analysis of the Orthogonal Greedy Algorithm}\label{orthogonal-greedy-section}
We begin with the following key lemma.
\begin{lemma}\label{entropy-packing-lemma}
 Let $\delta > 0$ and $\mathbb{D}$ be a dictionary with
 \begin{equation}\label{entropy-lemma-assumption}
  \epsilon_n(B_1(\mathbb{D})) \leq Cn^{-\frac{1}{2} - \delta}.
 \end{equation}
 Then there exists a $c = c(\delta, C) > 0$ such that for any sequence $d_1,...,d_n\in \mathbb{D}$, we have
  \begin{equation}\label{lemma-1-conclusion}
  \sum_{k=1}^n \|(I - P_{k-1})d_k\|^{-2} \geq cn^{1+2\delta},
 \end{equation}
 where $P_k$ is the orthogonal projection onto the span of $d_1,...,d_k$.
\end{lemma}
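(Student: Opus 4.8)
The plan is to connect the quantities $\|(I-P_{k-1})d_k\|$ to the metric entropy of $B_1(\mathbb{D})$ via a volumetric/packing argument. First I would observe that the numbers $\sigma_k := \|(I-P_{k-1})d_k\|$ are exactly the (absolute values of the) diagonal entries arising in the Gram–Schmidt orthogonalization of $d_1,\dots,d_n$; equivalently, $\prod_{k=1}^n \sigma_k = \sqrt{\det G}$, where $G$ is the Gram matrix of $d_1,\dots,d_n$, and this is the $n$-dimensional volume of the parallelepiped spanned by the $d_k$. The key point is that this parallelepiped is contained in $n\cdot B_1(\mathbb{D})$ (since each $d_k \in B_1(\mathbb{D})$, a sum $\sum a_k d_k$ with $\sum|a_k|\le n$ lies in $nB_1(\mathbb{D})$), so its volume, and more importantly the volume of any ellipsoid derived from it, is controlled by the entropy numbers of $B_1(\mathbb{D})$ restricted to the span $V := \mathrm{span}(d_1,\dots,d_n)$.

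The main estimate I would set up is a comparison, inside the $n$-dimensional space $V$, between the volume of the coordinate ellipsoid $E$ with semi-axes $\sigma_1,\dots,\sigma_n$ (in the Gram–Schmidt orthonormal basis) and a covering of $B_1(\mathbb{D})\cap V$ by Euclidean balls. Since $E \subset n\,B_1(\mathbb{D})$, covering $B_1(\mathbb{D})$ by $2^m$ balls of radius $\epsilon_m := \epsilon_m(B_1(\mathbb{D})) \le Cm^{-1/2-\delta}$ yields a covering of $\frac1n E$ by $2^m$ balls of the same radius, hence a volume bound $\mathrm{vol}(\tfrac1n E) \le 2^m \omega_n \epsilon_m^n$, where $\omega_n$ is the volume of the unit Euclidean ball in $\mathbb{R}^n$. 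Rearranging, $\prod_{k=1}^n \sigma_k = n^n \mathrm{vol}(\tfrac1n E)/\omega_n \le n^n 2^m \epsilon_m^n$ for every $m$. Now I would optimize the choice of $m$: taking $m = \lambda n$ for a well-chosen constant $\lambda$ gives $\prod \sigma_k \le n^n 2^{\lambda n} (C(\lambda n)^{-1/2-\delta})^n$, i.e. a bound of the form $\big(\prod_{k=1}^n \sigma_k\big)^{1/n} \le C' n^{1/2} \cdot n^{-1/2-\delta} = C' n^{-\delta}$ after absorbing constants depending only on $\lambda, C, \delta$.

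From the bound on the geometric mean of the $\sigma_k^2$, I would conclude via the AM–GM inequality (or rather its reciprocal form): by AM–HM, $\frac1n \sum_{k=1}^n \sigma_k^{-2} \ge \big(\prod_{k=1}^n \sigma_k^{2}\big)^{-1/n} = \big(\prod \sigma_k\big)^{-2/n} \ge (C')^{-2} n^{2\delta}$, which gives $\sum_{k=1}^n \sigma_k^{-2} \ge c\, n^{1+2\delta}$ with $c = (C')^{-2}$, as desired. I expect the main obstacle to be the first step — rigorously passing from the entropy of $B_1(\mathbb{D})$ as a subset of the infinite-dimensional $H$ to a volumetric covering estimate inside the finite-dimensional subspace $V$, and in particular checking that the radius exponent bookkeeping ($2^m$ balls of radius $\sim m^{-1/2-\delta}$ covering an $n$-dimensional ellipsoid) produces the decay $n^{-\delta}$ rather than something weaker; one must be careful that $\epsilon_m(B_1(\mathbb{D})\cap V) \le \epsilon_m(B_1(\mathbb{D}))$, which holds because orthogonal projection onto $V$ is a norm-$1$ map fixing $B_1(\mathbb{D})\cap V$, so covers of the ambient set project to covers of the slice. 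The rest is elementary optimization and convexity inequalities.
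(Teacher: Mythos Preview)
Your approach is essentially the same volumetric idea as the paper's: use that $\mathrm{co}(\pm d_1,\dots,\pm d_n)\subset B_1(\mathbb{D})$ has $n$-dimensional volume proportional to $\prod_k\sigma_k$, compare with the volume of $2^n$ covering balls of radius $\epsilon_n$, and extract $(\prod_k\sigma_k)^{1/n}\le C'n^{-\delta}$ via Stirling. Where you differ is in the final step: you pass from the geometric-mean bound to the desired sum bound by AM--GM on the $\sigma_k^{-2}$, whereas the paper argues by contradiction and first extracts a subsequence of $n$ indices (out of $2n$) for which each $\sigma_k$ individually satisfies $\sigma_k\ge c^{-1/2}n^{-\delta}$. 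Your route is shorter and avoids that extraction.

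There is, however, one genuine slip. You switch from the parallelepiped to ``the coordinate ellipsoid $E$ with semi-axes $\sigma_1,\dots,\sigma_n$ in the Gram--Schmidt orthonormal basis'' and then assert $E\subset nB_1(\mathbb{D})$. This containment is not justified and is in fact false: the Gram--Schmidt directions $e_k$ are linear combinations of $d_1,\dots,d_k$ whose $\ell^1$-coefficients can be arbitrarily large when the $d_k$ are nearly dependent, so points of $E$ need not lie in any fixed multiple of the cross-polytope of the $d_k$. (Already for $n=2$ with $d_1=e_1$, $d_2=\cos\theta\,e_1+\sin\theta\,e_2$, one checks that $E\not\subset 2B_1(\{d_1,d_2\})$.) The fix is immediate: drop the ellipsoid and run the covering/volume comparison directly with the symmetric parallelepiped $\{\sum a_kd_k:|a_k|\le1\}\subset nB_1(\mathbb{D})$, whose volume is $2^n\prod_k\sigma_k$, or (as the paper does) with the cross-polytope $\mathrm{co}(\pm d_1,\dots,\pm d_n)\subset B_1(\mathbb{D})$, whose volume is $\frac{2^n}{n!}\prod_k\sigma_k$ by the unit-determinant change of basis $d_k\mapsto\tilde d_k$. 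Either choice yields $(\prod_k\sigma_k)^{1/n}\le C'n^{-\delta}$ after taking $m=n$ and applying Stirling, and your AM--GM finish then gives the lemma.
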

The intuituve idea behind this lemma is that if \eqref{lemma-1-conclusion} fails, then the convex hull $B_1(\mathbb{D})$ must contain a relatively large skewed simplex. A comparison of volumes then gives a lower bound on its entropy.

\begin{proof}
 By scaling the dictionary, we may assume without loss of generality that $C = 1$. Fix a $c > 0$ and consider a sequence $d_1,...,d_{2n}\in \mathbb{D}$. Assume that
 \begin{equation}\label{contradicted-assumption}
  \sum_{k=1}^{2n} \|(I - P_{k-1})d_k\|^{-2} \leq cn^{1+2\delta}.
 \end{equation}
 We will show that for sufficiently small $c$ this contradicts the entropy bound \eqref{entropy-lemma-assumption}. Rescaling this value of $c$ by $2^{-1-2\delta}$ gives the desired bound \eqref{lemma-1-conclusion}.
 
 Choose the $n$ indices $k_1 < 
 \cdots < k_n$ for which $\|(I - P_{k-1})d_k\|^{-2}$ is the smallest. For each $i=1,...,n$, there are at least $n$ indices $k$ for which $\|(I - P_{k-1})d_k\|^{-2} \geq \|(I - P_{k_i-1})d_{k_i}\|^{-2}$. This gives the inequality
 \begin{equation}
  \sum_{k=1}^{2n} \|(I - P_{k-1})d_k\|^{-2} \geq n\|(I - P_{k_i-1})d_{k_i}\|^{-2},
 \end{equation}
 so that by \eqref{contradicted-assumption} we must have $\|(I - P_{k_i-1})d_{k_i}\|^{-2} \leq cn^{2\delta}$ and thus 
 \begin{equation}\label{eq-2031}
  \|(I - P_{k_i-1})d_{k_i}\| \geq c^{-\frac{1}{2}}n^{-\delta}
 \end{equation}
 for each $i=1,...,n$.
 Further, if we replace the projections $P_{k_i-1}$ by the orthogonal projection onto the span of $d_{k_1},...,d_{k_{i-1}}$, the length in \eqref{eq-2031} can only increase (since we are removing dictionary elements from the projection). Thus, by relabelling we obtain a sequence $d_1,...,d_n\in \mathbb{D}$ such that
 \begin{equation}\label{eq-2035}
  \|(I - P_{k-1})d_{k}\| \geq c^{-\frac{1}{2}}n^{-\delta}
 \end{equation}
 for $k=1,...,n$.
 
 Let $\tilde{d}_1,...,\tilde{d}_n$ be the Gram-Schmidt orthogonalization of the sequence $d_1,...,d_n$, i.e. $\tilde{d}_k = (I - P_{k-1})d_{k}$. From \eqref{eq-2035}, we obtain $\|\tilde{d}_i\| \geq c^{-\frac{1}{2}}n^{-\delta}$ for each $i=1,...,n$, and thus since the $\tilde{d}_i$ are orthogonal, we obtain the following bound
 \begin{equation}
  |\text{co}(\pm\tilde{d}_1,...,\pm\tilde{d}_n)| \geq (c^{-\frac{1}{2}}n^{-\delta})^n\frac{2^n}{n!}.
 \end{equation}
 Here $\text{co}(\pm\tilde{d}_1,...,\pm\tilde{d}_n)$ is the absolute convex hull of the sequence $\tilde{d}_1,...,\tilde{d}_n$.
 
 Next we use the fact that the change of variables between the $\{d_i\}$ and the $\{\tilde{d}_i\}$ is upper triangular with ones on the diagonal and thus has determinant $1$ (since $P_{k-1}d_k\in \text{span}(d_1,...,d_{k-1})$). This implies that
 \begin{equation}
  |\text{co}(\pm d_1,...,\pm d_n)| = |\text{co}(\pm\tilde{d}_1,...,\pm\tilde{d}_n)| \geq (c^{-\frac{1}{2}}n^{-\delta})^n\frac{2^n}{n!}.
 \end{equation}
 In other words the convex hull of the $d_i$ is a skewed simplex with large volume.
 
 We now use the covering definition of the entropy, setting $\epsilon := \epsilon_n(B_1(\mathbb{D}))_H$, and the fact that $\text{co}(\pm d_1,...,\pm d_n)\subset B_1(\mathbb{D})$ to get
 \begin{equation}
  (c^{-\frac{1}{2}}n^{-\delta})^n\frac{2^n}{n!} \leq |\text{co}(\pm d_1,...,\pm d_n)| \leq |B_1(\mathbb{D})| \leq (2\epsilon)^n\frac{\pi^{n/2}}{\Gamma\left(\frac{n}{2}+1\right)},
 \end{equation}
 where the right hand side is the volume of $2^n$ balls of radius $\epsilon$.
 
 Utilizing Sterling's formula and taking $n$-th roots, we get
 \begin{equation}
  \epsilon \geq Kc^{-\frac{1}{2}}n^{-\frac{1}{2}-\delta},
 \end{equation}
 for an absolute constant $K$. For sufficiently small $c$, this will contradict the bound \eqref{entropy-lemma-assumption}, which completes the proof.
\end{proof}

Finally, we come to the main result of this section, which shows that the orthogonal greedy algorithm \eqref{orthogonal-greedy} achieves a convergence rate which matches the entropy for dictionaries $\mathbb{D}$ whose entropy decays faster than $O(n^{-\frac{1}{2}})$. In fact, we prove a bit more, generalizing the result from \cite{barron2008approximation} to obtain improved approximation rates for the interpolation spaces between $\mathcal{K}_1(\mathbb{D})$ and $H$ as well.

\begin{theorem}\label{main-theorem}
 Let $\delta > 0$ and suppose that $\mathbb{D}\subset H$ is a dictionary which satisfies
 \begin{equation}
  \epsilon_n(B_1(\mathbb{D})) \leq Cn^{-\frac{1}{2} - \delta},
 \end{equation}
 for some constant $C < \infty$.
 Let the iterates $f_n$ be given by algorithm \eqref{orthogonal-greedy}, where $f\in H$. Further, let $h\in \mathcal{K}_1(\mathbb{D})$ be arbitrary. Then we have
 \begin{equation}\label{interpolation-bound}
  \|f_n - f\|^2 \leq \|f - h\|^2 + K\|h\|^2_{\mathcal{K}_1(\mathbb{D})}n^{-1-2\delta},
 \end{equation}
 where $K$ depends only upon $\delta$ and $C$. In particular, if $f\in \mathcal{K}_1(\mathbb{D})$, then
 \begin{equation}
  \|f_n - f\| \leq Kn^{-\frac{1}{2} - \delta}.
 \end{equation}

\end{theorem}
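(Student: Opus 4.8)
The plan is to run the classical telescoping analysis of the orthogonal greedy algorithm (as in \cite{barron2008approximation}), but to replace the crude bound $\|(I-P_{k-1})g_k\|\le 1$ with the quantitative estimate furnished by Lemma~\ref{entropy-packing-lemma}. Write $r_k=f-f_k$, $e_k=\|r_k\|$, $\lambda_k=\|(I-P_{k-1})g_k\|^2$, $a=\|h\|_{\mathcal{K}_1(\mathbb{D})}$, $\eta=\|f-h\|$, and $b_k=e_k^2-\eta^2$. First I would record the two standard facts about the iteration \eqref{orthogonal-greedy}. Since $f_{k-1}=P_{k-1}f$, the residual $r_{k-1}$ is orthogonal to $\mathrm{span}(g_1,\dots,g_{k-1})$, hence $e_{k-1}^2=\langle r_{k-1},f\rangle=\langle r_{k-1},h\rangle+\langle r_{k-1},f-h\rangle$; bounding the first term by expanding $h$ over $\mathbb{D}$ and using the greedy maximality of $g_k$, and the second by Cauchy--Schwarz, gives
\begin{equation*}
  |\langle r_{k-1},g_k\rangle|\ \ge\ \frac{e_{k-1}(e_{k-1}-\eta)}{a}.
\end{equation*}
Second, since $P_k$ is the rank-one update of $P_{k-1}$ in the direction of the unit vector along $(I-P_{k-1})g_k$, the exact Pythagorean identity
\begin{equation*}
  e_{k-1}^2-e_k^2\ =\ \|P_kf-P_{k-1}f\|^2\ =\ \frac{\langle r_{k-1},g_k\rangle^2}{\lambda_k}
\end{equation*}
holds.

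Combining these two displays with the elementary inequality $e_{k-1}^2(e_{k-1}-\eta)^2\ge\tfrac14(e_{k-1}^2-\eta^2)^2$, valid whenever $e_{k-1}\ge\eta\ge 0$, produces the recursion $b_{k-1}-b_k\ge b_{k-1}^2/(4a^2\lambda_k)$. Dividing by $b_{k-1}b_k$ and using that the residuals, and hence the $b_k$, are non-increasing gives $b_k^{-1}-b_{k-1}^{-1}\ge (4a^2\lambda_k)^{-1}$; telescoping over $k=1,\dots,n$ and discarding the positive term $b_0^{-1}$ yields
\begin{equation*}
  b_n^{-1}\ \ge\ \frac{1}{4a^2}\sum_{k=1}^n \|(I-P_{k-1})g_k\|^{-2}.
\end{equation*}
Now I would invoke Lemma~\ref{entropy-packing-lemma} with $d_k=g_k$ (whose associated projections coincide with the $P_k$ of the algorithm): the entropy hypothesis forces $\sum_{k=1}^n\|(I-P_{k-1})g_k\|^{-2}\ge cn^{1+2\delta}$ for some $c=c(\delta,C)>0$, so $b_n\le \tfrac{4a^2}{c}n^{-1-2\delta}$, which is precisely \eqref{interpolation-bound} with $K=4/c$; taking $h=f$ gives the final assertion. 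Before any of this, the degenerate cases must be cleared away: if $h=0$ the bound is trivial; if $e_{k-1}\le\eta$ for some $k\le n$ then monotonicity of the residuals already yields $\|f_n-f\|^2\le\eta^2$; otherwise $a>0$ and $e_{k-1}>\eta$ for all $k\le n$, which in turn forces $\lambda_k>0$ (else the greedy inner product would vanish, contradicting the first display) and $b_k>0$ for all $k$, so every division above is legitimate.

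The only ingredient beyond the classical $O(n^{-1/2})$ argument is the observation that the per-step decrease equals $\langle r_{k-1},g_k\rangle^2/\|(I-P_{k-1})g_k\|^2$ rather than merely $\langle r_{k-1},g_k\rangle^2$, so that the cumulative progress is governed by $\sum_k\|(I-P_{k-1})g_k\|^{-2}$; Lemma~\ref{entropy-packing-lemma} is exactly the assertion that small entropy of $B_1(\mathbb{D})$ forces this sum to grow like $n^{1+2\delta}$. Accordingly the substantive work is already contained in Lemma~\ref{entropy-packing-lemma}, and within the proof of the theorem I expect the only subtleties to be the bookkeeping of the degenerate cases needed to justify the telescoping, and the routine approximation argument (using the closure in \eqref{unit-ball-definition}) that lets one replace $h\in\mathcal{K}_1(\mathbb{D})$ by finite dictionary expansions when estimating $\langle r_{k-1},h\rangle$.
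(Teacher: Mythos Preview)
Your argument is correct, and in fact it is cleaner than the paper's. Both proofs arrive at the same recursion
\[
  b_{k-1}-b_k\ \ge\ \frac{b_{k-1}^2}{4a^2\lambda_k},\qquad \lambda_k=\|(I-P_{k-1})g_k\|^2,
\]
(the paper obtains $|\langle r_{k-1},g_k\rangle|\ge b_{k-1}/(2a)$ via Young's inequality rather than your Cauchy--Schwarz-plus-factoring route, but the recursion is identical). The difference lies in how the recursion is solved. The paper passes to $\log a_k$ (with $a_k=b_k/(4a^2)$), which yields $\log a_n\le -a_n\sum_k\lambda_k^{-1}$ and hence, after one application of Lemma~\ref{entropy-packing-lemma}, only the bound $a_n\lesssim n^{-1-2\delta}\log n$. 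To remove the logarithm the paper then breaks the iteration into dyadic blocks $[2^{j-1},2^j)$, applies Lemma~\ref{entropy-packing-lemma} separately on each block (using that dropping earlier elements from the projection can only increase $\|(I-\tilde P_{k-1})g_k\|$), and closes an induction on $j$.

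You sidestep all of this by the standard reciprocal trick: dividing $b_{k-1}-b_k\ge b_{k-1}^2/(4a^2\lambda_k)$ by $b_{k-1}b_k$ and using $b_{k-1}\ge b_k$ gives $b_k^{-1}-b_{k-1}^{-1}\ge(4a^2\lambda_k)^{-1}$, which telescopes \emph{exactly} and yields $b_n^{-1}\ge(4a^2)^{-1}\sum_{k=1}^n\lambda_k^{-1}$. A single invocation of Lemma~\ref{entropy-packing-lemma} then finishes the proof with no logarithmic loss and no dyadic decomposition. This is a genuine simplification: the paper's dyadic induction is unnecessary once one works with $b_k^{-1}$ rather than $\log b_k$. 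Your handling of the degenerate cases ($h=0$, some $b_k\le0$, or some $\lambda_k=0$) is also adequate; just make sure to note explicitly that if $b_n\le0$ the conclusion is immediate, so that when you divide by $b_{k-1}b_k$ you may assume $b_0\ge\cdots\ge b_n>0$.
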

We note that although $f_n$ is a linear combination of $n$ dictionary elements $g_1,...,g_n$, we do necessarily have that $f_n$ can be bounded in $\mathcal{K}_1(\mathbb{D})$, as we show in Proposition \ref{no-l1-bound-proposition}. This is due to the fact that the coefficients in the expansion generated by the orthogonal greedy algorithm \eqref{orthogonal-greedy} cannot be bounded in $\ell^1$.

Further, the bound \eqref{interpolation-bound} shows that the improved convergence behavior of the orthogonal greedy algorithm is robust to noise. This is critical for the statistical analysis of the algorithm, for instance for proving statistical consistency \cite{barron2008approximation}. Further, it enables one to prove a convergence rate for functions $f$ in the interpolation space $X_\theta = [H,\mathcal{K}_1(\mathbb{D})]_{\theta,\infty}$, analogous to the results in \cite{barron2008approximation}. We remark that in \cite{barron2008approximation} a Fourier integrability condition is given which guarantees membership in $X_\theta$ when $\mathbb{D} = \mathbb{P}_0^d$ is the dictionary corresponding to shallow neural networks with sigmoidal activation function. This generalizes the Fourier condition introduced by Barron in \cite{barron1993universal}.

\begin{corollary}
 Let $\mathbb{D}\subset H$ be a dictionary satisfying the assumptions of Theorem \ref{main-theorem}. Then for $f\in X_\theta$ we have
 \begin{equation}
  \|f_n - f\| \leq K\|f\|_{X_\theta}n^{-(\frac{1}{2} + \delta)\theta},
 \end{equation}
 where $K$ depends only upon $\delta$ and $C$.
\end{corollary}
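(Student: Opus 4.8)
The plan is to derive the corollary directly from the noise-robust estimate \eqref{interpolation-bound} of Theorem \ref{main-theorem} by taking the auxiliary function $h$ to be a near-minimizer of the $K$-functional and then optimizing the resulting bound over a free scale parameter. Concretely, fix $n$ and set $t_n = n^{-(\frac{1}{2}+\delta)}$. Since $f\in X_\theta$, the definition of the interpolation norm gives $K(f,t_n)\le \|f\|_{X_\theta}t_n^{\theta}$, so I would choose $h\in\mathcal{K}_1(\mathbb{D})$ with
\[
 \|f-h\| + t_n\|h\|_{\mathcal{K}_1(\mathbb{D})} \le 2\|f\|_{X_\theta}t_n^{\theta}.
\]
A near-minimizer is used because the infimum defining $K(f,t)$ need not be attained; note also that any $g$ with $\|g\|_{\mathcal{K}_1(\mathbb{D})}=\infty$ does not participate in the infimum, so requiring $h\in\mathcal{K}_1(\mathbb{D})$ is legitimate. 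In particular this yields $\|f-h\|\le 2\|f\|_{X_\theta}t_n^{\theta}$ and $\|h\|_{\mathcal{K}_1(\mathbb{D})}\le 2\|f\|_{X_\theta}t_n^{\theta-1}$.

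Next I would substitute this $h$ into \eqref{interpolation-bound}, obtaining
\[
 \|f_n-f\|^2 \le \|f-h\|^2 + K\|h\|^2_{\mathcal{K}_1(\mathbb{D})}n^{-1-2\delta} \le 4\|f\|_{X_\theta}^2 t_n^{2\theta} + 4K\|f\|_{X_\theta}^2 t_n^{2\theta-2}n^{-1-2\delta}.
\]
The choice $t_n=n^{-(\frac{1}{2}+\delta)}$ is precisely the one balancing the two terms, since $t_n^{2\theta-2}n^{-1-2\delta} = n^{-(\frac{1}{2}+\delta)(2\theta-2)}n^{-1-2\delta} = n^{-(1+2\delta)\theta} = t_n^{2\theta}$. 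Hence $\|f_n-f\|^2 \le (4+4K)\|f\|_{X_\theta}^2 n^{-(1+2\delta)\theta}$, and taking square roots gives $\|f_n-f\| \le K'\|f\|_{X_\theta}n^{-(\frac{1}{2}+\delta)\theta}$, with $K'$ depending only on $\delta$ and $C$ through the constant $K$ of Theorem \ref{main-theorem}.

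I do not expect any genuine obstacle: this is the standard $K$-functional interpolation trick, and all of the substantive work is already carried by Theorem \ref{main-theorem} (which in turn rests on Lemma \ref{entropy-packing-lemma}). The only points needing a little care are the legitimacy of selecting the near-minimizer $h$ inside $\mathcal{K}_1(\mathbb{D})$ rather than all of $H$ (handled above), the trivial edge cases $\theta=0$ and $\theta=1$ (the latter already being the special case $f\in\mathcal{K}_1(\mathbb{D})$ of the theorem), and verifying that $t_n$ is a positive real for every $n\ge 1$ so that the estimate $K(f,t_n)\le\|f\|_{X_\theta}t_n^\theta$ applies as stated.
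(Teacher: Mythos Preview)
Your proposal is correct and follows essentially the same approach as the paper: the paper takes the square root of \eqref{interpolation-bound} first (using $\sqrt{a^2+b^2}\le a+b$) so that the infimum over $h$ becomes exactly the $K$-functional at $t=Mn^{-\frac{1}{2}-\delta}$, whereas you pick a near-minimizer of the $K$-functional at $t_n=n^{-(\frac{1}{2}+\delta)}$ and substitute into the squared inequality. Both are the standard interpolation trick and yield the same bound with constants depending only on $\delta$ and $C$.
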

\begin{proof}
 Taking the square root of \eqref{interpolation-bound} we see that
 \begin{equation}
  \|f_n - f\| \leq \|f - h\| + M\|h\|_{\mathcal{K}_1(\mathbb{D})}n^{-\frac{1}{2}-\delta}
 \end{equation}
 for a new constant $M$ (we can take $M = \sqrt{K}$). Taking the infemum over $h\in \mathcal{K}_1(\mathbb{D})$, we get
 \begin{equation}\label{eq-387}
  \|f_n - f\| \leq K(Mn^{-\frac{1}{2}-\delta}, f),
 \end{equation}
 where $K$ is the $K$-functional introduced in the introduction (see also \cite{devore1993constructive}, Chapter 6 for a more detailed theory). The definition of the interpolation space $X_\theta$ implies that
 \begin{equation}
  K(t,f) \leq \|f\|_{X_\theta}t^{\theta}
 \end{equation}
 for all $t > 0$. Setting $t = Mn^{-\frac{1}{2}-\delta}$ in \eqref{eq-387} gives the desired result.
\end{proof}

\begin{proof}[Proof of Theorem \ref{main-theorem}]
 Throughout the proof, let $P_k$ denote the projection onto $g_1,...,g_k$ and $r_k = f - f_k$ denote the residual. Since $f_k$ is the best approximation to $f$ from the space $V_k = \text{span}(g_1,...,g_k)$, we have
 \begin{equation}\label{eq-348}
  \|r_k\|^2 \leq \left\|r_{k-1} - \frac{\langle r_{k-1}, (I-P_{k-1})g_k\rangle}{\|(I-P_{k-1})g_k\|^{2}}(I-P_{k-1})g_k\right\|^2 = \|r_{k-1}\|^2 - \frac{|\langle r_{k-1}, (I-P_{k-1})g_k\rangle|^2}{\|(I-P_{k-1})g_k\|^{2}}.
 \end{equation}
 Next, we note that since $r_{k-1}$ is orthogonal to $g_1,...,g_{k-1}$ we have $\langle r_{k-1}, (I-P_{k-1})g_k\rangle = \langle r_{k-1}, g_k\rangle$. In addition, as $r_{k-1}$ is orthogonal to $f_{k-1} = f - r_{k-1}$, we see that
 \begin{equation}
 \begin{split}
 \|r_{k-1}\|^2 = \langle r_{k-1}, f\rangle &= \langle r_{k-1}, h + f - h\rangle\leq \|h\|_{\mathcal{K}_1}\max_{g\in \mathbb{D}} |\langle r_{k-1}, g\rangle|  + \langle r_{k-1}, f-h\rangle \\
 &\leq \|h\|_{\mathcal{K}_1} |\langle r_{k-1}, g_k\rangle|  + \frac{1}{2}\left(\|r_{k-1}\|^2 + \|f-h\|^2\right).
 \end{split}
 \end{equation}
 Setting $b_k = \|r_k\|^2 - \|f-h\|^2$, we can rewrite this as
 \begin{equation}
  b_{k-1} \leq 2\|h\|_{\mathcal{K}_1} |\langle r_{k-1}, g_k\rangle|,
 \end{equation}
 which gives the lower bound
 \begin{equation}
  |\langle r_{k-1}, (I-P_{k-1})g_k\rangle| = |\langle r_{k-1}, g_k\rangle| \geq b_{k-1}\|h\|_{\mathcal{K}_1}^{-1}.
 \end{equation}
 If $b_k$ is ever negative, then the desired result clearly holds for all $n \geq k$ since $b_k$ is decreasing. So we assume without loss of generality that $b_k > 0$ in what follows.
 
 Subtracting $\|f-h\|^2$ from equation \eqref{eq-348} and using lower bound above, we get the recursion
 \begin{equation}
  b_k \leq b_{k-1}\left(1 - \frac{b_{k-1}}{4\|(I-P_{k-1})g_k\|^2\|h\|_{\mathcal{K}_1}^2}\right).
 \end{equation}
 Define the sequence $a_k = (2\|h\|_{\mathcal{K}_1})^{-2}b_k$, and we get the recursion
 \begin{equation}\label{eq-329}
  a_k \leq a_{k-1}(1 - \|(I-P_{k-1})g_k\|^{-2}a_{k-1}).
 \end{equation}
 If $a_0 > 1$, then this recursion implies that $a_1 < 0$ (since $\|g_k\| \leq 1$ and thus $\|(I-P_{k-1})g_k\| \leq 1$) and as remarked before the desired conclusion is easily seen to hold in this case. Hence we can assume without loss of generality that $a_0 \leq 1$. In addition, if  $a_k \leq 0$ ever holds the result immediately follows for all $n \geq k$. So we also assume without loss of generality in the following that $a_k > 0$.
 
 Utilizing the approximation $\log(1 + x) \leq x$, we rewrite the recursion \eqref{eq-329} as
 \begin{equation}\label{recursion-equation-335}
  \log(a_k) \leq \log(a_{k-1})-\|(I-P_{k-1})g_k\|^{-2}a_{k-1}.
 \end{equation}
 At this point, we could expand the recursion and use that $a_0 \leq 1$ to get
 \begin{equation}
  \log(a_n) \leq -\sum_{k=0}^{n-1}\|(I-P_{k-1})g_k\|^{-2}a_k \leq -a_n\sum_{k=0}^{n-1}\|(I-P_{k-1})g_k\|^{-2},
 \end{equation}
 where the last inequality is due to the fact that the sequence $a_k$ is decreasing. Applying Lemma \ref{entropy-packing-lemma} we obtain
 \begin{equation}
  \log(a_n) \leq -ca_nn^{1+2\delta}.
 \end{equation}
 Solving this gives the desired result up to logarithmic factors.

 In order to remove the logarithmic factors, we use again that the sequence $a_k$ is decreasing and dyadically expand the recursion \eqref{recursion-equation-335} to get
 \begin{equation}
 \begin{split}
  \log(a_{2^j}) &\leq \log(a_{2^{j-1}}) - \sum_{k=2^{j-1}}^{2^j-1} \|(I-P_{k-1})g_k\|^{-2}a_k \\
  &\leq \log(a_{2^{j-1}}) - a_{2^j}\sum_{k=2^{j-1}}^{2^j-1} \|(I-P_{k-1})g_k\|^{-2},
  \end{split}
 \end{equation}
 for each $j \geq 0$ (if we interpret $a_{1/2} = a_0$).
 
 Note that $\|(I-P_{k-1})g_k\| \leq \|(I-\tilde P_{k-1})g_k\|$, where $\tilde P_{k-1}$ is the projection onto the space spanned by $g_{2^{j-1}},...,g_k$. Thus, applying Lemma \ref{entropy-packing-lemma} to the dictionary $\mathbb{D}$ and the sequence $g_{2^{j-1}},...,g_{2^j-1}$, we get
 \begin{equation}
  \log(a_{2^j}) \leq \log(a_{2^{j-1}}) - c 2^{(1+2\delta)(j-1)}a_{2^j}.
 \end{equation}
 Here we may assume, by decreasing $c$ if necessary, that $c \leq 1$.
 Next we will prove by induction that $$a_{2^j} \leq (1+2\delta)c^{-1}2^{-(1+2\delta)(j-1)}.$$
 This completes the proof since $a_k$ is decreasing, and so it implies that
 \begin{equation}
  a_k \leq Ck^{-(1+2\delta)},
 \end{equation}
 with $C = 4^{1+2\delta}(1+2\delta)c^{-1}$.
 
 The proof by induction proceeds as follows. We note that for $j=0$, $a_1 \leq a_0\leq  1 \leq (1+2\delta)c^{-1}2^{1+2\delta}$. (Note that here we use that $c$ is taken $\leq 1$.)
 
 Next, assume that the result holds for $j-1$, so we get
 \begin{equation}\label{logarithmic-recursion}
  \log(a_{2^j}) \leq \log((1+2\delta)c^{-1}) - \log(2)(1+2\delta)(j-2) - c 2^{(1+2\delta)(j-1)}a_{2^j}.
 \end{equation}
 Observe that the left hand side of \eqref{logarithmic-recursion} is an increasing function and the right hand side is a decreasing function of $a_{2^j}$. Further, if we set $a_{2^j} = (1+2\delta)c^{-1}2^{-(1+2\delta)(j-1)}$, we obtain equality in \eqref{logarithmic-recursion}. This implies that 
 $$a_{2^j} \leq (1+2\delta)c^{-1}2^{-(1+2\delta)(j-1)},$$
 completing the inductive step.
\end{proof}

\section{Numerical Experiments}\label{numerical-experiments-section}
In this section, we give numerical experiments which demonstrate the improved convergence rates derived in Section \ref{orthogonal-greedy-section}. The setting we consider is a special case of the situation considered in \cite{lee1996efficient}. We consider the dictionary
\begin{equation}
 \mathbb{P}_0^2:=\{\sigma_0(\omega\cdot x + b):~\omega\in \mathbb{R}^2,~b\in \mathbb{R}\}\subset L^2([0,1]^2),
\end{equation}
where $\sigma_0$ is the Heaviside activation function.
Nonlinear approximation from this dictionary corresponds to approximation by shallow neural networks with Heaviside activation function. The results proved in \cite{siegel2021sharp} imply that the metric entropy of the convex hull of $\mathbb{P}_0^2$ satisfies
\begin{equation}
 \epsilon_n(B_1(\mathbb{P}_0^2)) \eqsim n^{-\frac{3}{4}}.
\end{equation}
We use the orthgonal greedy algorithm to approximate the target function
\begin{equation}
 f(x,y) = \sin(\pi(x+y))^2\sin(\pi(x - y^2))
\end{equation}
by a non-linear expansion from the dictionary $\mathbb{P}_0^2$. Note that the target function $f$ is smooth so that $f\in \mathcal{K}_1(\mathbb{P}_0^2)$ \cite{barron1993universal}. We approximate the $L^2$ norm on the domain $[0,1]^2$ by the empirical $L^2$-norm on a set of $N$ sample points $X_N = \{x_1,...,x_N\}$ drawn uniformly at random from the square $[0,1]^2$. The subproblem
\begin{equation}
 g_k = \arg\max_{g\in \mathbb{P}_0^2} |\langle g,r_k\rangle|
\end{equation}
becomes the combinatorial problem of determining the optimal splitting of the sample points $x_1,...,x_N$ by a hyperplane $\omega\cdot x + b$ such that
\begin{equation}
 |\langle \sigma_0(\omega\cdot x + b),r_k\rangle_{L^2(X_N)}| = \left|\sum_{\omega\cdot x_i + b \geq 0} r_k(x_i)\right|
\end{equation}
is maximized. There are $O(N^2)$ such hyperplane splittings and the optimal splitting can be determined using $O(N^2\log(N))$ operations using a simple modification of the algorithm in \cite{lee1996efficient} which is specific to the case of $\mathbb{R}^2$. 

We run this algorithm with $N=5000$ for a total of $n = 100$ iteration\footnote{All of the code used to run the experiments and generate the plots shown here can be found at \url{https://github.com/jwsiegel2510/OrthogonalGreedyConvergence}}. In Figure \ref{experimental-results} we plot the error as a function of iteration $n$ on a log-log scale. Estimating the convergence order from this plot (here we have removed the first $10$ errors since the rate depends upon the tail of the error sequence) gives a convergence order of $O(n^{-0.717})$. Previous theoretical results for the orthogonal greedy algorithm \cite{devore1996some,barron2008approximation} only give a convergence rate of $O(n^{-\frac{1}{2}})$, while our bounds incorporating the compactness of the dictionary $\mathbb{P}_0^d$ imply a convergence rate of $O(n^{-\frac{3}{4}})$. The empirically estimated convergence order is significantly better than $O(n^{-\frac{1}{2}})$ and is very close to the rate predicted by Theorem \ref{main-theorem}.
\begin{figure}
\begin{center}
 \includegraphics[scale=0.5]{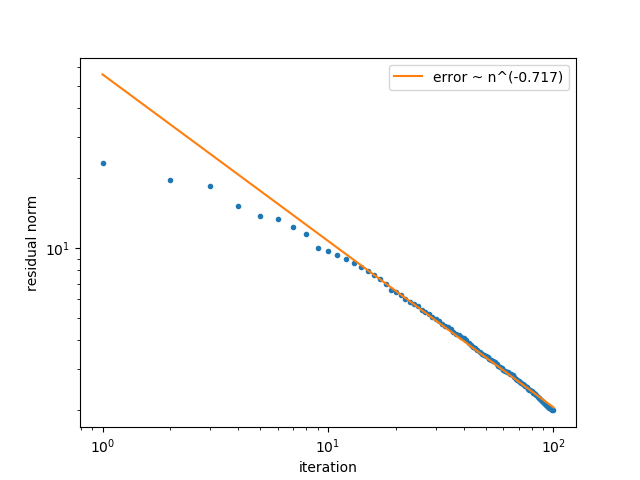}
 \end{center}
 \caption{Estimated convergence rate of the orthogonal greedy algorithm for the dictionary $\mathbb{P}_0^d$. We see that the empirically observed convergence order of $O(n^{-0.717})$ is very close to the rate of $O(n^{-\frac{3}{4}})$ predicted by Theorem \ref{main-theorem}.}
 \label{experimental-results}
\end{figure}

In Figure \ref{function-plots} we plot the target function and the approximants $f_n$ obtained at iterations $5$, $10$, and $100$. This illustrates how the approximation of the target function $f$ generated by the orthogonal greedy algorithm improves as we add more dictionary elements to our expansion.
\begin{figure}
 \begin{center}
 \includegraphics[scale=0.5]{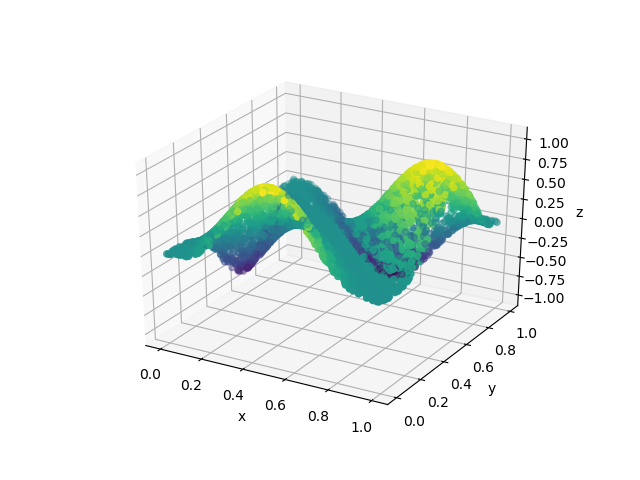}
 \includegraphics[scale=0.5]{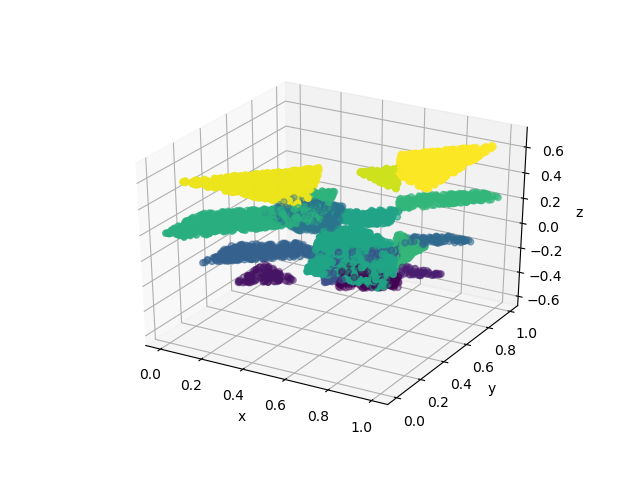}
 \includegraphics[scale=0.5]{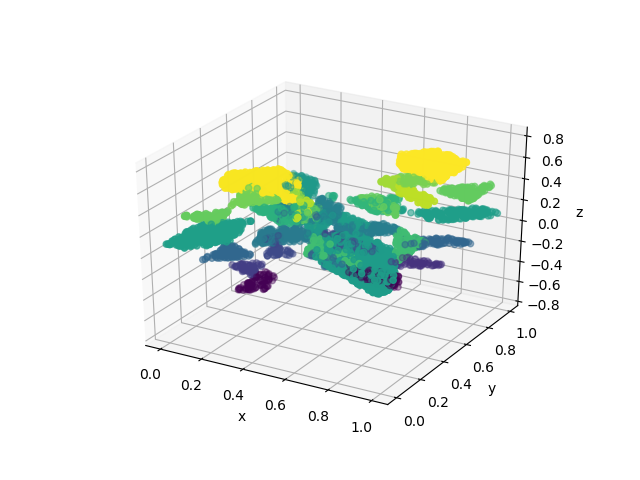}
 \includegraphics[scale=0.5]{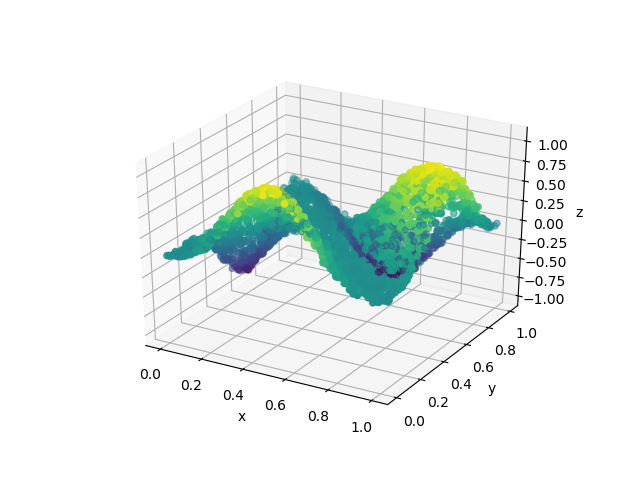}
 \end{center}
 \caption{(Top Left) Target function $f$, (Top Right) Approximation after $5$ iterations of the orthogonal greedy algorithm, (Bottom Left) Approximation after $10$ iterations of the orthogonal greedy algorithm, (Bottom Right) Approximation after $100$ iterations of the orthogonal greedy algorithm.}
 \label{function-plots}
\end{figure}

\section{Lower Bounds}\label{lower-bounds-section}
In this section, we show that the the iterates generated by the orthogonal greedy algorithm cannot be bounded in $\mathcal{K}_1(\mathbb{D})$ and that the rate derived in Theorem \ref{main-theorem} cannot be further improved.

\begin{proposition}\label{no-l1-bound-proposition}
 For each $R < \infty$, there exists a (normalized) dictionary $\mathbb{D}\subset H$ and an $f\in B_1(\mathbb{D})$ such that if $f_3$ is the iterate generated by the orthogonal greedy algorithm when applied to $f$ after $3$ steps, then
 \begin{equation}
  \|f_3\|_{\mathcal{K}_1(\mathbb{D})} > R.
 \end{equation}

\end{proposition}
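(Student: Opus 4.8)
The plan is to construct, for each $R$, a low-dimensional example (say in $H = \mathbb{R}^3$ or $\mathbb{R}^4$) in which the orthogonal greedy algorithm is forced to pick three dictionary elements that are nearly linearly dependent, so that expressing $f_3 = P_3 f$ in terms of those three elements requires huge coefficients. The key mechanism is that the greedy selection rule only cares about the inner product $|\langle g, r_{k-1}\rangle|$, not about how well-conditioned the selected elements are; by making two of the chosen directions almost parallel we can make the coordinates of the orthogonal projection blow up while keeping everything inside $B_1(\mathbb{D})$.

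Concretely, I would work in $\mathbb{R}^3$ with the standard inner product. Fix a small parameter $\varepsilon > 0$ (to be chosen depending on $R$) and take the dictionary $\mathbb{D}$ to contain the two nearly-parallel unit vectors $u_1$ and $u_2$ with $\langle u_1, u_2\rangle = 1 - \varepsilon$, together with a third unit vector $u_3$ (roughly orthogonal to the $u_1$--$u_2$ plane, or slightly tilted), plus whatever additional elements are needed to (a) make the set symmetric / a legitimate normalized dictionary and (b) ensure the greedy algorithm actually selects $u_1, u_2, u_3$ in the first three steps. The target $f$ should be chosen in $B_1(\mathbb{D})$ — e.g. a convex combination of a few dictionary elements, or simply a small vector — arranged so that: step 1 picks $u_1$ (largest correlation with $f$); step 2, after projecting off $u_1$, picks $u_2$ even though $(I - P_1)u_2$ is tiny (of order $\sqrt{\varepsilon}$) — this forces the coefficient of $u_2$ in $f_2$ to be of order $1/\sqrt{\varepsilon}$ or larger, with a compensating large coefficient on $u_1$; step 3 picks $u_3$. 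Since $f_3 = P_3 f$ is a linear combination $a_1 u_1 + a_2 u_2 + a_3 u_3$ with $|a_1|, |a_2| \gtrsim \varepsilon^{-1/2}$, and since all elements of $\mathbb{D}$ in the expansion have unit norm, the variation norm satisfies $\|f_3\|_{\mathcal{K}_1(\mathbb{D})} \geq$ (something like) the $\ell^1$ norm of the coefficients in *any* representation — here I must be slightly careful, because $\mathcal{K}_1$ allows arbitrary dictionary representations, so I would instead lower-bound $\|f_3\|_{\mathcal{K}_1(\mathbb{D})}$ by exploiting that $\mathbb{D}$ is a *finite* set and $f_3$ is a fixed vector, e.g. via $\|f_3\|_{\mathcal{K}_1(\mathbb{D})} \geq \|f_3\|_H / \max_{g}\|g\| $ is too weak — rather, I would pick the geometry (dictionary lying in a half-space, or a duality/separating-hyperplane argument) so that the gauge of $B_1(\mathbb{D})$ at $f_3$ is genuinely large, of order $\varepsilon^{-1/2}$. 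Choosing $\varepsilon$ small enough then gives $\|f_3\|_{\mathcal{K}_1(\mathbb{D})} > R$.

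The main obstacle is the last point: controlling $\|f_3\|_{\mathcal{K}_1(\mathbb{D})}$ from below, since this is the gauge of the *full* convex hull, not just the $\ell^1$ norm relative to the three selected elements. A vector with large coefficients in one representation may have a small variation norm if other dictionary elements offer a cheaper representation. I would handle this by a duality argument: $\|f_3\|_{\mathcal{K}_1(\mathbb{D})} = \sup\{\langle f_3, \phi\rangle : \sup_{g\in\mathbb{D}}|\langle g,\phi\rangle| \leq 1\}$, and then exhibit an explicit functional $\phi$ (e.g. pointing in the "thin" direction of the $u_1$--$u_2$ wedge, normalized so it pairs to at most $1$ with every dictionary element) for which $\langle f_3, \phi\rangle$ is large. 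This requires that $f_3$ genuinely has a large component in that thin direction, which is exactly what the ill-conditioned projection produces, and that no dictionary element has a large component there — so the dictionary must be designed with this separation built in from the start. A secondary, more routine obstacle is verifying that the greedy algorithm really selects $u_1, u_2, u_3$ in order (and not some other element, or $u_2$ before it is "supposed" to be hard); this is a finite computation that constrains the placement of $f$ and the auxiliary dictionary elements, and I would simply choose all parameters explicitly and check the three $\arg\max$ conditions.
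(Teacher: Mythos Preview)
Your overall strategy---a finite-dimensional dictionary in which the orthogonal greedy algorithm is forced to select ill-conditioned elements---matches the paper's, and you correctly flag the lower bound on $\|f_3\|_{\mathcal{K}_1(\mathbb{D})}$ as the crux. But the concrete plan has a real gap.

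Working in $\mathbb{R}^3$ cannot succeed: if the three selected elements are independent they span $H$, so $f_3=P_3f=f$ and $\|f_3\|_{\mathcal{K}_1(\mathbb{D})}\le 1$. More importantly, the mechanism ``$u_1\approx u_2$ forces large coefficients in $f_2$'' does not hold as stated, and your proposed duality witness is precisely what fails. Let $v=(u_1-u_2)/\|u_1-u_2\|$ be the unit thin direction; since $v$ lies in the span $V_3$ of the selected elements, $\langle f_3,v\rangle=\langle f,v\rangle$, and because $f\in B_1(\mathbb{D})$ this is at most $\max_{g\in\mathbb{D}}|\langle g,v\rangle|$. You want every dictionary element to have small $v$-component (so that a large multiple of $v$ sits in the polar of $B_1(\mathbb{D})$), but then $\langle f_3,v\rangle$ is itself small and the $u_1,u_2$-coefficients of $f_3$ stay bounded. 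In fact for \emph{any} $\phi\in V_3$ belonging to the polar one has $\langle f_3,\phi\rangle=\langle f,\phi\rangle\le 1$, so no test functional lying in the thin direction can certify a large variation norm. A working $\phi$ must carry a large component orthogonal to $V_3$, which already forces extra dimensions and extra dictionary elements with carefully arranged off-span components---and then one must still prevent those extra elements from furnishing a cheap representation of $f_3$.

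The paper resolves all of this at once by working in $\mathbb{R}^5$ with five elements. The three that OGA selects, $x_3,x_2,x_1$, are ill-conditioned via a cascade rather than a single near-parallel pair: $x_2$ is almost $x_3=e_3$ with a tiny $e_2$-part, while $x_1$ is almost $-e_2$, i.e.\ $x_1$ itself points along the thin direction of $\{x_2,x_3\}$. This makes the unique coefficients of $f_3$ in $x_1,x_2,x_3$ blow up like $1/\varepsilon$. The remaining elements $x_4,x_5$ carry $f=\tfrac12x_4+\tfrac12x_5\in B_1(\mathbb{D})$ and have nonzero components in two extra coordinates $e_4,e_5$; since $f_3$ has zero $e_4$- and $e_5$-components and only $x_4,x_5$ touch those coordinates, every representation of $f_3$ is forced to have $a_4=a_5=0$. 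Thus the paper bypasses the duality argument entirely by forcing uniqueness of the representation, after which $\|f_3\|_{\mathcal{K}_1(\mathbb{D})}$ equals the explicit $\ell^1$-sum of the $x_1,x_2,x_3$ coefficients.
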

This shows that in the worst case the iterates generated by the orthogonal greedy algorithm are not bounded in the variation space $\mathcal{K}_1(\mathbb{D})$.
\begin{proof}
 Let $H = \mathbb{R}^5$ with orthonormal basis $e_1,...,e_5$. Let $0 < \delta < \epsilon < \frac{1}{2}$ and consider the dictionary $\mathbb{D} = \{x_1,...,x_5\}\subset H$ given by
 \begin{equation}
 \begin{split}
  x_1 &= \epsilon e_1 - \sqrt{1-\epsilon^2}e_2 \\ 
  x_2 &= \epsilon e_2 + \sqrt{1-\epsilon^2}e_3 \\
  x_3 &= e_3 \\
  x_4 &= \frac{\epsilon}{4}(e_1 + e_2) + \frac{1}{2}e_3 + ce_4 + \delta e_5 \\
  x_5 &= \frac{\epsilon}{4}(e_1 + e_2) + \frac{1}{2}e_3 - ce_4 + \delta e_5,
  \end{split}
 \end{equation}
 where $c$ is chosen so that $\|x_i\|_H = 1$ for all $i$. Consider the element
 \begin{equation}
  f = \frac{\epsilon}{4}(e_1 + e_2) + \frac{1}{2}e_3 + \delta e_5 = \frac{1}{2}x_4 + \frac{1}{2}x_5\in B_1(\mathbb{D}).
 \end{equation}
 We claim that if $\delta < \frac{\epsilon}{\sqrt{8}}$, then the orthogonal greedy algorithm applied to $f$ and $\mathbb{D}$ will select $g_1 = x_3$, $g_2 = x_2$, and $g_3 = x_1$ in the first $3$ steps. 
 
 Indeed, we calculate
 \begin{equation}
 \begin{split}
  \langle f,x_1\rangle = \frac{\epsilon}{4}(\epsilon - \sqrt{1 - \epsilon^2}),~\langle f,x_2\rangle &= \frac{1}{2}\left(\frac{\epsilon^2}{2} + \sqrt{1 - \epsilon^2}\right),~\langle f,x_3\rangle = \frac{1}{2},\\
  \langle f,x_4\rangle = \langle f,x_5\rangle &= \frac{\epsilon^2}{8} + \frac{1}{4} + \delta^2.
  \end{split}
 \end{equation}
 We now verify, by differentiating for example, that for $\epsilon > 0$ we have
 \begin{equation}
  \frac{\epsilon^2}{2} + \sqrt{1 - \epsilon^2} < 1,
 \end{equation}
 which implies that $|\langle f,x_2\rangle| < |\langle f,x_3\rangle|$. The other inequalities are obvious (recalling that $\delta < \frac{\epsilon}{\sqrt{8}}$) and so it holds that $g_1 = x_3$. Projecting $f$ orthogonal to $x_3 = e_3$, we get $$r_1 := f - f_1 = \frac{\epsilon}{4}(e_1 + e_2) + \delta e_5.$$ Calculating inner products, we see that
 \begin{equation}
  \begin{split}
  \langle r_1,x_1\rangle &= \frac{\epsilon}{4}(\epsilon - \sqrt{1 - \epsilon^2}),~\langle r_1,x_2\rangle = \frac{\epsilon^2}{4},\\
  \langle f,x_4\rangle &= \langle f,x_5\rangle = \frac{\epsilon^2}{8} + \delta^2.
  \end{split}
 \end{equation}
 Since $\delta < \frac{\epsilon}{\sqrt{8}}$, these relations imply that $g_2 = x_2$. Projecting orthogonal to $\text{span}(g_1,g_2) = \text{span}(x_3,x_2) = \text{span}(e_2,e_3)$, we get $$r_2 := f - f_2 = \frac{\epsilon}{4}e_1 + \delta e_5.$$
 Finally, computing inner products, we get
 \begin{equation}
  \langle r_1,x_1\rangle = \frac{\epsilon^2}{4},~\langle f,x_4\rangle = \langle f,x_5\rangle = \frac{\epsilon^2}{16} + \delta^2.
 \end{equation}
 Again, since $\delta < \frac{\epsilon}{\sqrt{8}}$ this implies that $g_3 = x_1$. Projecting orthogonal to $\text{span}(g_1,g_2,g_3) = \text{span}(e_1,e_2,e_3)$, we obtain
 \begin{equation}
  r_3 = \delta e_5~\text{and}~f_3 = \frac{\epsilon}{4}(e_1 + e_2) + \frac{1}{2}e_3.
 \end{equation}
 Now, if $f_3 = \sum_{i=1}^5 a_ix_i$, then by taking inner products with $e_4$ and $e_5$, we see that $a_4 - a_5 = 0$ and $a_4 + a_5 = 0$ which implies that $a_4 = a_5 = 0$. Thus $f_3 = \sum_{i=1}^3 a_ix_i$ and since the $x_i$ are linearly independent, the $a_i$ are uniquely determined, and a simple calculation shows them to be
 \begin{equation}
  a_1 = \frac{1}{4},~a_2 = \frac{1}{4} + \frac{\sqrt{1-\epsilon^2}}{\epsilon},~a_3 = \frac{1}{2} - \frac{\sqrt{1-\epsilon^2}}{4}-\frac{1-\epsilon^2}{\epsilon}.
 \end{equation}
 We finally get
 \begin{equation}
  \|f\|_{\mathcal{K}_1(\mathbb{D})} = |a_1| + |a_2| + |a_3| \geq \frac{\sqrt{1-\epsilon^2}}{\epsilon}.
 \end{equation}
 Letting $\epsilon\rightarrow 0$, we obtain the desired result.

\end{proof}

\begin{proposition}
 There exists a Hilbert space $H$ and a dictionary $\mathbb{D}\subset H$ such that $\epsilon_n(B_1(\mathbb{D})) \leq Cn^{-\frac{1}{2}-\alpha}$ and for each $n$
 \begin{equation}
  \sup_{f\in B_1(\mathbb{D})} \|f - f_n\|_H \geq Kn^{-\frac{1}{2}-\alpha}, 
 \end{equation}
 where $f_n$ is the $n$-th iterate of the orthogonal greedy algorithm applied to $f$.
\end{proposition}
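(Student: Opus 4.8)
\emph{Construction.} The plan is to work in $H=\ell^2(\mathbb{N})$ with orthonormal basis $e_1,e_2,\dots$ and to take the diagonal dictionary $\mathbb{D}=\{\pm k^{-\alpha}e_k:\ k\in\mathbb{N}\}$, which is normalized since $\|k^{-\alpha}e_k\|=k^{-\alpha}\le 1$. For this dictionary $B_1(\mathbb{D})$ is precisely the weighted octahedron $W:=\{x\in\ell^2:\ \sum_{k\ge 1}k^{\alpha}|x_k|\le 1\}$, with $\|x\|_{\mathcal{K}_1(\mathbb{D})}=\sum_{k\ge 1}k^{\alpha}|x_k|$. Grouping coordinates dyadically (level $j$ holds the $\asymp 2^j$ indices $k\in(2^j,2^{j+1}]$, on which $k^\alpha\asymp 2^{j\alpha}$), one sees that $W$ is, up to equivalence of norms, the unit ball in the $\ell^2$-metric of the Besov space $B^{s}_{1,1}$ on a one-dimensional domain with $s=\alpha+\tfrac12$; equivalently $W$ is the image of the $\ell^1$-ball under the diagonal operator $D:\ell^1\to\ell^2$, $De_k=k^{-\alpha}e_k$. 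I would then invoke the classical entropy estimate for such balls/operators (see \cite{lorentz1996constructive}, Ch.~15, and the references therein), which gives $\epsilon_n(W)\eqsim n^{-s}=n^{-\frac12-\alpha}$ because $s>\tfrac12$. In particular $\epsilon_n(B_1(\mathbb{D}))\le Cn^{-\frac12-\alpha}$, so the hypothesis of the Proposition holds---and it is automatically tight, which it must be in view of Theorem~\ref{main-theorem}.

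\emph{Lower bound for the greedy algorithm.} Given $n$, I would set $N=2n$ and take the test function $f=f^{(n)}=\sum_{k=1}^{N}a_ke_k$ with $a_k=c\,k^{\alpha}\bigl(1+\eta(N-k)\bigr)$ for $k=1,\dots,N$, where $0<\eta\le 1/N$ is a small fixed parameter and $c>0$ is chosen so that $\sum_{k=1}^{N}k^{\alpha}a_k=1$; then $\|f\|_{\mathcal{K}_1(\mathbb{D})}=1$, so $f\in B_1(\mathbb{D})$, and $c\asymp N^{-1-2\alpha}$ since $\sum_{k\le N}k^{2\alpha}\asymp N^{1+2\alpha}$. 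The point of the perturbation $1+\eta(N-k)$ is that $k^{-\alpha}a_k=c(1+\eta(N-k))$ is strictly decreasing in $k$; since the $k$-th coordinate of the current residual equals $a_k$ until index $k$ is selected, and since $|\langle \pm k^{-\alpha}e_k,r\rangle|=k^{-\alpha}|r_k|$, the algorithm is forced to select the directions $e_1,\dots,e_n$ in its first $n$ steps (no index $k>N$ is ever chosen, as there $a_k=0$). Hence $f_n=\sum_{k=1}^{n}a_ke_k$ and
\begin{equation*}
 \|f-f_n\|^2=\sum_{k=n+1}^{N}a_k^2\ \ge\ c^2\sum_{k=n+1}^{2n}k^{2\alpha}\ \asymp\ c^2n^{1+2\alpha}\ \asymp\ N^{-2-4\alpha}n^{1+2\alpha}\ \asymp\ n^{-1-2\alpha},
\end{equation*}
so $\sup_{f\in B_1(\mathbb{D})}\|f-f_n\|\ge Kn^{-\frac12-\alpha}$ for every $n$, as claimed.

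\emph{Main obstacle.} The genuinely delicate part is the \emph{log-free} entropy bound $\epsilon_n(W)\lesssim n^{-\frac12-\alpha}$. The easy attempts---truncating $W$ to its $\asymp\epsilon^{-1/\alpha}$ ``long'' coordinates and covering the resulting finite weighted octahedron via Maurey's empirical-sampling lemma, or covering each dyadic block independently and forming a product cover---lose a factor $\log n$ coming from the intermediate range of scales, and only yield $\epsilon_n(W)\lesssim n^{-\frac12-\alpha}(\log n)^{c}$, which is not strong enough. Removing this logarithm is where essentially all the work lies: it requires the sharper Birman--Solomjak-type argument, in which each $x\in W$ is approximated by a single vector with $\asymp\epsilon^{-2/(1+2\alpha)}$ nonzero entries, quantized at scale-dependent accuracy, and the total number of such vectors is controlled by a global budget argument rather than a block-by-block product. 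By comparison, the greedy lower bound above is the straightforward half.
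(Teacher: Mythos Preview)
Your proposal is correct and follows essentially the same route as the paper: the diagonal dictionary $\{k^{-\alpha}e_k\}$ in $\ell^2$, the citation of the classical log-free entropy estimate for the resulting weighted $\ell^1$-ball, and a test function supported on the first $N=2n$ coordinates so that the greedy algorithm is forced to exhaust the lowest indices first. The only notable difference is that the paper takes the simpler test function $f=\frac{1}{N}\sum_{k\le N}k^{-\alpha}e_k$, for which the relevant inner products $\langle f,k^{-\alpha}e_k\rangle=\frac{1}{N}k^{-2\alpha}$ are already strictly decreasing, so your tie-breaking perturbation $1+\eta(N-k)$ is unnecessary.
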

This implies the optimality of the rates in Theorem \ref{main-theorem} under the fiven assunmptions on the entropy of $B_1(\mathbb{D})$.

\begin{proof}
Let $H = \ell^2$ and consider the dictionary
\begin{equation}
 \mathbb{D} = \{k^{-\alpha}e_k:~k\geq 1\}.
\end{equation}
It is known that $\epsilon_n(B_1(\mathbb{D})) \leq Cn^{-\frac{1}{2} - \alpha}$ for a constant $C$ \cite{ball1990entropy}. Let $N$ be an integer, and consider the element
\begin{equation}
 f_N = \frac{1}{N}\sum_{k=1}^N k^{-\alpha}e_k.
\end{equation}
We obviously have that $\|f_N\|_{\mathcal{K}_1(\mathbb{D})} = 1$. Moreover, it is clear that after $n$ iterations of the orthogonal greedy algorithm, the residual $r_n$ will satisfy
\begin{equation}
 r_n = \frac{1}{N}\sum_{k=n+1}^N k^{-\alpha}e_k,
\end{equation}
since the dictionary elements $g_k$ chosen at each iteration will be $g_k = k^{-\alpha}e_k$. Choosing $N = 2n$, we get
\begin{equation}
 \|r_n\| = \left(\frac{1}{4n^2}\sum_{k=n+1}^{2n} k^{-2\alpha}\right)^{\frac{1}{2}} \geq \left(\frac{1}{4n}(2n)^{-2\alpha}\right)^{\frac{1}{2}} \geq 2^{-(1+\alpha)}n^{-\frac{1}{2}-\alpha}.
\end{equation}
\end{proof}
This shows that the rate derived in Theorem \ref{main-theorem} is unimprovable. To conclude, we note that in the preceding argument we used a sequence of elements $f_N\in B_1(\mathbb{D})$. However, the same lower bound can be obtained for a single element $f$ by modifying the construction in \cite{livshits2009lower}, Section 8 in a straightforward manner.

\section{Conclusion}
We have shown that the orthogonal greedy algorithm achieves an improved convergence rate on dictionaries whose convex hull is compact. An important point, however, is that the expansions thus derived generally do not have their coefficients $a_i$ bounded in $\ell^1$. It is an important follow-up question whether the improved rates can be obtained by a greedy algorithm which satisfies this further restriction. Another interesting research direction is to extend our analysis to greedy algorithms for other problems such as reduced basis methods \cite{devore2013greedy,binev2011convergence} or sparse PCA \cite{cai2013sparse}.

\section{Acknowledgements}
We would like to thank Professors Russel Caflisch, Ronald DeVore, Weinan E, Albert Cohen, Stephan Wojtowytsch and Jason Klusowski for helpful discussions. We would also like to thank the anonymous referees for their helpful comments. This work was supported by the Verne M. Willaman Chair Fund at the Pennsylvania State University, and the National Science Foundation (Grant No. DMS-1819157 and DMS-2111387).

\bibliographystyle{spmpsci}
\bibliography{refs}

\end{document}